\newcommand{\C}{\mathbb{C}}      
\newcommand{\F}{\mathbb{F}}      
\newcommand{\Sp}{\operatorname{Sp}}
\newcommand{\SL}{\operatorname{SL}}
\newcommand{\GL}{\operatorname{GL}}
\newcommand{\GU}{\operatorname{GU}}
\newcommand{\legendre}[2]{\genfrac{(}{)}{}{}{#1}{#2}}
\newtheorem{lause}{Theorem}[section]
\newtheorem{lemma}[lause]{Lemma}
\newtheorem*{lause*}{Theorem}
\theoremstyle{definition}
\theoremstyle{remark}
\newtheorem{remark}[lause]{Remark}
\newtheorem*{mot*}{Motivation}
\newtheorem*{acknow*}{Acknowledgements}
\numberwithin{equation}{section}
\begin{document}

\title[Matrix generators for Weil representations]{Matrix generators for Weil representations}  

\author{Mikko Korhonen}
\thanks{\emph{Present address:} Mathematics Research Centre, Tampere University, 33720 Tampere, Finland}

\address{M. Korhonen, Shenzhen International Center for Mathematics, Southern University of Science and Technology, Shenzhen 518055, Guangdong, P. R. China}
\email{korhonen\_mikko@hotmail.com {\text{\rm(Korhonen)}}}
\thanks{}

\date{\today}

\begin{abstract}
Let $r$ be an odd prime and $\mathbb{F}$ a field containing a primitive $r$th root of unity. Then for all $\ell \geq 1$, there is a faithful representation $f: \operatorname{Sp}_{2\ell}(r) \rightarrow \operatorname{GL}_{r^\ell}(\mathbb{F})$ called the \emph{Weil representation}. We provide explicit matrices generating $\operatorname{Sp}_{2\ell}(r)$ in $\operatorname{GL}_{r^\ell}(\mathbb{F})$, which we have implemented in Magma. We also describe such generators for the \emph{irreducible Weil representations} of $\operatorname{Sp}_{2\ell}(r)$, which are of degree $(r^{\ell} \pm 1)/2$ and arise as irreducible constituents of the Weil representations.
\end{abstract}

\maketitle

\section{Introduction} 

Let $r$ be an odd prime, and let $\F$ be a field containing a primitive $r$th root of unity. Then for all $\ell \geq 1$, there is an embedding of $\Sp_{2\ell}(r)$ into $\GL_{r^{\ell}}(\F)$, arising from the \emph{Weil representation} of $\Sp_{2\ell}(r)$. The purpose of this note is to provide explicit matrices that generate this copy of $\Sp_{2\ell}(r)$ in $\GL_{r^{\ell}}(\F)$. Originating from the work of Weil in \cite{Weil1964}, the Weil representation has been extensively studied in the literature, and there are several different proofs of its existence and construction \cite{BoltRoomWall1961}, \cite{Ward1972}, \cite{Howe1973}, \cite{Isaacs1973}, \cite{Gerardin1977}, \cite{GlasbyHowlett1992}, \cite{Szechtman1998}. The construction described in this note gives yet another proof of the existence of the Weil representation. However, our main motivation here will be a computational implementation. We have implemented the generators in Magma \cite{MAGMA}, and the code is available at \cite{KorhonenGithub}.

To describe the generators, we will first have to setup some basic facts about representations of extraspecial groups, we refer to \cite[Chapter A, Section 20 and Chapter B, Section 9]{DoerkHawkes} for more details. Let $R$ be an extraspecial $r$-group of exponent $r$, so $|R| = r^{1+2\ell}$ for some $\ell \geq 1$. We note that $R$ is often denoted by $R = r_{+}^{1+2\ell}$ in the literature. Then every irreducible $\F[R]$-module is absolutely irreducible, and every irreducible $\F[R]$-module of dimension $> 1$ is faithful of dimension $r^{\ell}$ \cite[Chapter B, Theorem 9.16]{DoerkHawkes}. Moreover faithful irreducible $\F[R]$-modules are quasi-equivalent \cite[A, 20.8 (d) and B, 9.16(ii)]{DoerkHawkes}, which implies that they define a unique conjugacy class of subgroups of $\GL_{r^{\ell}}(\F)$.

Suppose then that $R < \GL(W)$, where $W$ is an absolutely irreducible $\F[R]$-module of dimension $r^{\ell}$. Let $\theta$ be a primitive $r$th root of unity in $\F$. Note that since the center $Z(R)$ is cyclic of order $r$, we have $Z(R) = \langle \theta I_{W} \rangle$. The basic observation needed to understand the structure of $N_{\GL(W)}(R)$ is that the quotient $R/Z(R)$ can be considered as a vector space of dimension $2\ell$ over the finite field $\mathbb{F}_r$ of order $r$. The group commutator then induces a non-degenerate alternating bilinear form $b: R/Z(R) \times R/Z(R) \rightarrow \mathbb{F}_r$ on $R/Z(R)$, with $$[x,y] = \theta^{b(xZ(R), yZ(R))}$$ for all $x,y \in R$. 

It is clear that elements of $\GL(W)$ normalizing $R$ will leave invariant the bilinear form $b$ defined on $R/Z(R)$, so we have a homomorphism \begin{equation}\label{eq:pimap}\pi: N_{\GL(W)}(R) \rightarrow \Sp_{2\ell}(r),\end{equation} where $\pi(g)$ is defined as the action of $g$ on $R/Z(R)$, for all $g \in N_{\GL(W)}(R)$.

It is then well known that the following hold, see for example \cite[Lemma 3.1.1, Theorem 3.2.2 (ii)]{KorhonenBook}. \begin{align} \label{eq:kerpi}&\operatorname{Ker} \pi = RZ, \text{ where } Z \cong \F^\times \text{ is the subgroup of scalar matrices;} \\ \label{eq:kerpiquotient}&N_{\GL(W)}(R)/\operatorname{Ker} \pi \cong \Sp_{2\ell}(r). \end{align} It turns out that $N_{\GL(W)}(R)$ is a split extension of $\operatorname{Ker} \pi$, meaning that the normalizer contains a subgroup $G \cong \Sp_{2\ell}(r)$ with $G \cap \operatorname{Ker} \pi = 1$. This subgroup $G$ then defines the Weil representation of $\Sp_{2\ell}(r)$.

To describe the generators for $G \cong \Sp_{2\ell}(r)$, we first have to describe the embedding $R < \GL(W)$. We will also need a well-known set of generators for $N_{\GL(W)}(R)$, which go back to the work of Jordan (see Remark \ref{remark:history}). Let $V$ be an $r$-dimensional $\F$-vector space with basis $v_0$, $v_1$, $\ldots$, $v_{r-1}$. We define linear maps $A,B,C,E: V \rightarrow V$ by \begin{equation} \label{eq:maps1} \begin{aligned} Av_{\xi} &= \theta^{\xi} v_{\xi} \\ Bv_{\xi} &= v_{\xi+1} \\ C v_{\xi} &= \sum_{0 \leq i < r} \theta^{i \xi} v_i \\ E v_{\xi} &= \theta^{\frac{\xi(\xi-1)}{2}} v_{\xi} \end{aligned} \end{equation} for all $0 \leq \xi < r$, where the indices are interpreted modulo $r$. It is clear that $A,B,E$ are invertible, and $C$ is also invertible by the Vandermonde determinant.

Then $R_0 = \langle A,B \rangle \cong r_{+}^{1+2}$ is absolutely irreducible, and is normalized by $C$ and $E$. Let $W = V \otimes \cdots \otimes V$, where $V$ appears $\ell$ times in the tensor product. Since $R_0$ is absolutely irreducible, the tensor product $R = R_0 \otimes \cdots \otimes R_0$ ($\ell$ times) is an absolutely irreducible subgroup of $\GL(W)$, see for example \cite[Lemma 4.4.3 (vi)]{KleidmanLiebeck}. Here $R$ is a central product of $\ell$ copies of $r_{+}^{1+2}$, which implies that $R \cong r_{+}^{1+2\ell}$.

As a basis for $W$, we take the vectors $v_{\xi_1, \ldots, \xi_{\ell}} := v_{\xi_1} \otimes \cdots \otimes v_{\xi_{\ell}}$ for $0 \leq \xi_1, \ldots, \xi_{\ell} < r$. We define linear maps $A_t, B_t, C_t, D_{st}, E_t: W \rightarrow W$ for all $1 \leq t \leq \ell$ and $1 \leq s < t \leq \ell$ by \begin{equation}\label{eq:ABCDE}\begin{aligned} A_t v_{\xi_1, \ldots, \xi_{\ell}} &= \theta^{\xi_t} v_{\xi_1, \ldots, \xi_{\ell}} \\ B_t v_{\xi_1, \ldots, \xi_{\ell}} &= v_{\xi_1, \ldots, \xi_{t-1}, \xi_{t}+1, \xi_{t+1}, \ldots, \xi_{\ell}} \\ C_t v_{\xi_1, \ldots, \xi_{\ell}} &= \sum_{0 \leq i < r} \theta^{i \xi_t} v_{\xi_1, \ldots, \xi_{t-1}, i, \xi_{t+1}, \ldots, \xi_{\ell}} \\ D_{st} v_{\xi_1, \ldots, \xi_{\ell}} &= \theta^{\xi_s \xi_t} v_{\xi_1, \ldots, \xi_{\ell}} \\ E_t v_{\xi_1, \ldots, \xi_{\ell}} &= \theta^{\frac{\xi_t(\xi_t-1)}{2}} v_{\xi_1, \ldots, \xi_{\ell}} \end{aligned}\end{equation} for all $0 \leq \xi_1, \ldots, \xi_{\ell} < r$, where as before the indices are interpreted modulo $r$. Note that $A_t = I_{r^{t-1}} \otimes A \otimes I_{r^{\ell-t}}$, $B_t = I_{r^{t-1}} \otimes B \otimes I_{r^{\ell-t}}$, $C_t = I_{r^{t-1}} \otimes C \otimes I_{r^{\ell-t}}$, $E_t = I_{r^{t-1}} \otimes E \otimes I_{r^{\ell-t}}$ for all $1 \leq t \leq \ell$.

The following theorem is to a large extent due to Jordan, see for example \cite[Theorem 3.2.2]{KorhonenBook} for a proof.

\begin{lause}\label{thm:jordanthm}
Let $Z$ be the group of scalar matrices in $\GL(W)$, and let $R$ be the subgroup of $\GL(W)$ generated by the linear maps $A_t$ and $B_t$. Then the following statements hold:
	\begin{enumerate}[\normalfont (i)]
		\item $R < \GL(W)$ is an absolutely irreducible extraspecial group of exponent $r$, and order $r^{1+2\ell}$.
		\item $N_{\GL(W)}(R)$ is generated by $Z$ together with the linear maps $A_t$, $B_t$, $C_t$, $D_{st}$, $E_t$;
		\item $N_{\GL(W)}(R) / RZ \cong \Sp_{2\ell}(r)$. 
	\end{enumerate}
\end{lause}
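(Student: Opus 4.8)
The plan is to verify the three statements in order, establishing the extraspecial structure first, then computing normalizer generators, then identifying the quotient.

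For part (i), I would start with the single-tensor-factor case $R_0 = \langle A, B \rangle$. A direct computation shows $AB = \theta BA$, so the commutator $[A,B] = \theta I_V$ is central of order $r$, and $A^r = B^r = I_V$ (the former because $\theta$ is a primitive $r$th root of unity, the latter because $B$ cyclically permutes the basis). Thus $R_0$ is a nonabelian group of order $r^3$ with center $\langle \theta I_V \rangle$, and every element has order dividing $r$; since $r$ is odd this forces exponent $r$, so $R_0 \cong r_+^{1+2}$. Absolute irreducibility of $R_0$ follows because the $A$-eigenspaces are the lines $\langle v_\xi \rangle$, which $B$ permutes transitively, so no proper nonzero subspace is $R_0$-invariant; one can confirm this over the algebraic closure via Schur's lemma, noting the only matrices commuting with both $A$ and $B$ are scalars. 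Then, as the excerpt already records via \cite[Lemma 4.4.3 (vi)]{KleidmanLiebeck}, the tensor product $R = R_0 \otimes \cdots \otimes R_0$ is absolutely irreducible, and the central product of $\ell$ copies of $r_+^{1+2}$ (identifying the centers) is again extraspecial of exponent $r$ and order $r^{1+2\ell}$.

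For part (ii), I would first check that each of $A_t, B_t, C_t, D_{st}, E_t$ genuinely normalizes $R$, by computing its conjugation action on the generators $A_s, B_s$ modulo $Z$. The factored forms $A_t = I \otimes A \otimes I$ etc. reduce most of these to the single-factor identities relating $C, E$ to $A, B$ (as already asserted: $C$ and $E$ normalize $R_0$), while $D_{st}$ couples two tensor factors and must be handled by a direct index computation on basis vectors. Once containment $\langle Z, A_t, B_t, C_t, D_{st}, E_t \rangle \leq N_{\GL(W)}(R)$ is established, the reverse inclusion is the substantive point: I would use the general structure \eqref{eq:kerpi} and \eqref{eq:kerpiquotient}, namely $N_{\GL(W)}(R)/RZ \cong \Sp_{2\ell}(r)$, to reduce to showing that the images of the listed generators under $\pi$ generate all of $\Sp_{2\ell}(r)$. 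Computing each $\pi$-image as an explicit $2\ell \times 2\ell$ symplectic matrix (acting on $R/Z(R) \cong \mathbb{F}_r^{2\ell}$ with the basis coming from the classes of $A_t, B_t$), one checks that these symplectic transformations constitute a known generating set for $\Sp_{2\ell}(r)$ — diagonal/transvection-type generators from $C_t, E_t, D_{st}$ together with the torus elements from $A_t$. Since $\pi$ is surjective with kernel $RZ \leq \langle Z, A_t, B_t\rangle$, surjectivity of the $\pi$-images forces the listed elements to generate the whole normalizer.

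Part (iii) is then immediate from \eqref{eq:kerpiquotient} together with $\operatorname{Ker}\pi = RZ$ from \eqref{eq:kerpi}, both of which are quoted from \cite{KorhonenBook} and may be assumed.

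I expect the main obstacle to be the reverse inclusion in part (ii): proving that nothing outside $\langle Z, A_t, B_t, C_t, D_{st}, E_t\rangle$ normalizes $R$. The cleanest route avoids a direct normalizer computation and instead leverages the abstract isomorphism $N_{\GL(W)}(R)/RZ \cong \Sp_{2\ell}(r)$, so that the entire burden becomes verifying that the explicit $\pi$-images exhaust a generating set of $\Sp_{2\ell}(r)$; the delicate bookkeeping lies in correctly identifying each $\pi$-image as a symplectic matrix in the chosen coordinates and in citing or reproving that these particular symplectic transformations generate the group. The coupling generator $D_{st}$, whose action mixes two factors, is likely the fussiest single computation.
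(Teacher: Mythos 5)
The paper offers no internal proof of this theorem: it is stated with a pointer to \cite[Theorem 3.2.2]{KorhonenBook}, so there is no in-paper argument to compare line by line. Your outline faithfully reconstructs the standard argument behind that citation, and it is consistent with the background facts the paper itself quotes: part (i) via the single-factor computation ($[A,B]=\theta I$, $A^r=B^r=1$, absolute irreducibility from the distinct $A$-eigenvalues permuted transitively by $B$) followed by \cite[Lemma 4.4.3 (vi)]{KleidmanLiebeck}; part (iii) immediately from \eqref{eq:kerpi} and \eqref{eq:kerpiquotient}; and for part (ii) your reduction is logically sound --- since $\operatorname{Ker} \pi = RZ \leq \langle Z, A_t, B_t \rangle$, the proposed subgroup equals $N_{\GL(W)}(R)$ as soon as its $\pi$-image is all of $\Sp_{2\ell}(r)$. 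Be aware, though, of where the real weight sits: the generation statement you defer to ``citing or reproving'' is precisely Jordan's theorem (\cite[Th\'eor\`eme 221, p. 174]{jordantraite}, recorded in Remark \ref{remark:generatorswrite}), and \eqref{eq:kerpiquotient} is itself cited to the very result \cite[Theorem 3.2.2]{KorhonenBook} being proved, where surjectivity of $\pi$ is established by exhibiting these same generators. So your decomposition (surjectivity assumed abstractly, generation verified separately) reorganizes rather than eliminates the substantive work; that is legitimate given the paper's own conventions, but a self-contained proof would have to supply one of the two. Also, in (i) the claim ``every element has order dividing $r$'' requires the collection identity $(A^a B^b)^r = \theta^{ab\,r(r-1)/2} = 1$, which is exactly where oddness of $r$ enters; you gesture at this but should make it explicit.

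One concrete error to fix: there are no ``torus elements from $A_t$.'' Since $A_t, B_t \in R \leq \operatorname{Ker}\pi$, conjugation by them is trivial on $R/Z(R)$, so $\pi(A_t) = \pi(B_t) = 1$ and these generators contribute nothing to the symplectic group. Generation must come from $\pi(C_t)$ (the Weyl-type element $e_t \mapsto -f_t$, $f_t \mapsto e_t$), $\pi(E_t)$ (the transvection $f_t \mapsto f_t + e_t$), and $\pi(D_{st})$ (the coupled map $f_t \mapsto f_t + e_s$, $f_s \mapsto f_s + e_t$) alone --- which does suffice, by Jordan. The slip is harmless to the architecture of your proof, since the trivial images can simply be discarded from the generating set, but as written it misidentifies the $\pi$-images and would derail the ``delicate bookkeeping'' step you yourself flag as the crux.
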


\begin{remark}\label{remark:history}
The generators $A_t$, $B_t$, $C_t$, $D_{st}$, $E_t$ defined in~\eqref{eq:ABCDE} are originally due to Jordan, see \cite[(15), p. 437, and (25)--(27), p. 441]{jordantraite} and \cite[\S X]{jordan1908recherches}. Later similar generators have appeared in various references, for example in \cite[Section 9]{HoltRoneyDougal2005}, \cite[Section 4.6]{KleidmanLiebeck}, and \cite[Theorem 21.4]{SuprunenkoBook}.
\end{remark}

We will now describe generators for $G \cong \Sp_{2\ell}(r)$. First on the representation $V$ of $R_0 = r_{+}^{1+2}$ we started with, we define $U: V \rightarrow V$ by $U = A^{(r+1)/2} E$, so $$U v_{\xi} = \theta^{\frac{\xi(\xi+r)}{2}} v_{\xi}$$ for all $0 \leq \xi < r$. Then for $1 \leq t \leq \ell$ we define similarly $U_t: W \rightarrow W$ by $U_t = A_t^{(r+1)/2} E_t$, so \begin{equation}\label{eq:U}U_t v_{\xi_1, \ldots, \xi_{\ell}} = \theta^{\frac{\xi_t(\xi_t+r)}{2}} v_{\xi_1, \ldots, \xi_{\ell}}\end{equation} for all $0 \leq \xi_1, \ldots, \xi_{\ell} < r$. Note that then $U_t = I_{r^{t-1}} \otimes U \otimes I_{r^{\ell-t}}$ for all $1 \leq t \leq \ell$.

Next we define a scalar multiplier \begin{equation}\label{eq:lambda}\lambda = (-r)^{(r-1)/2} \det(C)^{-1}\end{equation} that we will use to change each $C_t$ to a linear map of determinant one. Our main result is the following.

\begin{lause}\label{thm:mainthm}
Let $G$ be the subgroup of $\GL(W)$ generated by the linear maps $\lambda C_t$, $D_{st}$, and $U_t$. Then $G \cong \Sp_{2\ell}(r)$, with an isomorphism given by restriction of the map $\pi$ to $G$.
\end{lause}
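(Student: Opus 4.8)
The plan is to prove that $\pi|_G$ is an isomorphism by treating surjectivity and injectivity separately, the former being immediate and the latter carrying essentially all of the difficulty.

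\emph{Surjectivity.} Since the scalar group $Z$ and the generators $A_t, B_t$ of $R$ all lie in $\operatorname{Ker}\pi = RZ$ by \eqref{eq:kerpi}, multiplying a generator by a scalar or by an element of $R$ does not change its image under $\pi$. Thus $\pi(\lambda C_t) = \pi(C_t)$ and, writing $U_t = A_t^{(r+1)/2}E_t$ as in \eqref{eq:U}, $\pi(U_t) = \pi(E_t)$, while $\pi(D_{st})$ is unaffected. A direct commutator computation on the basis $\{A_t, B_t\}$ of $R/Z(R)$ identifies $\pi(E_t)$ and $\pi(D_{st})$ as symplectic transvections and $\pi(C_t)$ as the order-$4$ ``rotation'' interchanging $A_t$ and $B_t$. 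By Theorem \ref{thm:jordanthm}(ii) these elements, together with $Z$ and the $A_t, B_t$, generate $N_{\GL(W)}(R)$; applying $\pi$ and invoking \eqref{eq:kerpiquotient} then gives $\pi(G) = \Sp_{2\ell}(r)$.

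\emph{Injectivity.} Put $K := \operatorname{Ker}(\pi|_G) = G \cap RZ$, a normal subgroup of $G$ with $G/K \cong \Sp_{2\ell}(r)$; the goal is $K = 1$. First I would check that $G \le \SL(W)$: the factor $\lambda$ in \eqref{eq:lambda} is chosen precisely so that $\det(\lambda C_t) = 1$, and $\det D_{st} = \det U_t = 1$ follows from $\sum_{\xi=0}^{r-1}\xi \equiv \sum_{\xi=0}^{r-1}\xi^2 \equiv 0 \pmod r$, valid since $r$ is odd. Hence $K \le RZ \cap \SL(W)$. Next, restricting the quotient map $RZ \to RZ/Z \cong R/Z(R)$ to $K$ yields a homomorphism $\phi: K \to R/Z(R)$ that is $G$-equivariant, where $G$ acts on the target through $\pi$ by the natural action of $\Sp_{2\ell}(r)$ on $\F_r^{2\ell}$. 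The image $\phi(K)$ is therefore an $\Sp_{2\ell}(r)$-submodule of the natural module; as the latter is irreducible, either $\phi(K) = 0$, so that $K \le Z$ is central and scalar, or $\phi(K) = R/Z(R)$. After excluding the full case, $G$ becomes a central extension of $\Sp_{2\ell}(r)$ by a cyclic group of scalars lying in $\SL(W)$.

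The main obstacle is the final step: showing that this central scalar kernel is in fact trivial, equivalently that the chosen lifts close up to a copy of $\Sp_{2\ell}(r)$ with no extra scalars. This is exactly what the normalizations are engineered to guarantee, and I would verify it by checking that $\lambda C_t$, $D_{st}$, $U_t$ satisfy the defining relations of $\Sp_{2\ell}(r)$ (for instance a Steinberg-type presentation adapted to the generators of Remark \ref{remark:history}), thereby forcing $|G| \le |\Sp_{2\ell}(r)|$ and hence $K = 1$. The relevant identities are elementary and independent of $\F$: using $\theta^r = 1$ one finds $U_t^r = D_{st}^r = I$, matching the order $r$ of the corresponding transvections, while the computation $C^2 = rP$ with $P v_\xi = v_{-\xi}$ together with \eqref{eq:lambda} yields $(\lambda C_t)^4 = I$ and pins down $(\lambda C_t)^2$ as the correct central involution-lift. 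The correcting factor $A_t^{(r+1)/2}$ in $U_t$ serves the same purpose for the mixed relations, cancelling the scalars that would otherwise obstruct the splitting; assembling these identities into a presentation is the crux of the argument.
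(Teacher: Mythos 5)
Your overall architecture (surjectivity via Theorem \ref{thm:jordanthm}, then reduction to a central extension, then killing the central kernel) matches the paper's, but two of the three load-bearing steps are asserted rather than proved, and one auxiliary claim is false. First, the phrase ``after excluding the full case'' hides a real argument: nothing in your setup rules out $\phi(K) = R/Z(R)$, i.e.\ that $G \cap RZ$ projects onto all of $R/Z(R)$, and determinants cannot help since $R \le \SL(W)$ (both $A$ and $B$ have determinant $1$). The paper excludes this with a specific device (Lemma \ref{lemma:tinvolution}): the involution $\sigma: v_{\xi_1,\ldots,\xi_\ell} \mapsto v_{-\xi_1,\ldots,-\xi_\ell}$ is, up to sign, $(\lambda C_1)^2\cdots(\lambda C_\ell)^2$, is centralized by all of $G$, and satisfies $C_R(\sigma) = Z(R)$; hence $G \cap RZ = G \cap Z$ immediately. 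You would need this lemma or a substitute. Second, your final step --- verifying a full Steinberg-type presentation of $\Sp_{2\ell}(r)$ on these generators --- is exactly what you yourself call ``the crux,'' and it is left undone; the few relations you list ($U_t^r = D_{st}^r = 1$, $(\lambda C_t)^4 = 1$) are nowhere near sufficient. Note that the determinant normalization alone can never finish the job: $\theta I_W$ has determinant $\theta^{r^\ell} = 1$, so scalar matrices of order $r$ pass every $\SL(W)$ constraint, and the central kernel genuinely is an a priori nontrivial $r$-group. The paper avoids needing any presentation (except for $(r,\ell)=(3,1)$, where it uses $\langle x,y \mid x^4 = y^3 = (xy)^3 = [x^2,y] = 1\rangle \cong \SL_2(3)$): it proves $G$ is \emph{perfect} by showing $G/[G,G]$ is an $r$-group (Lemma \ref{lemma:Gprops}) while $\lambda C_t$, $(\lambda C_t)^2 D_{st}$, and $\lambda C_t U_t$ have order prime to $r$ (Lemmas \ref{lemma:relations} and \ref{lemma:relations2}), forcing all generators into $[G,G]$; triviality of the Schur multiplier of $\Sp_{2\ell}(r)$ then gives $G \cong \Sp_{2\ell}(r)$. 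The hard computational content sits in the relation $(\lambda C_t U_t)^3 = 1$, which rests on $(CU)^3 = r\operatorname{Tr}(U) I_r$ and the Gauss-sum evaluation $\operatorname{Tr}(U)^2 = (-1)^{(r-1)/2} r$ (Lemma \ref{lemma:tracesquare}); your sketch does not touch this number-theoretic core.

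Separately, your claim that $\det U_t = 1$ because $\sum_{\xi=0}^{r-1}\xi^2 \equiv 0 \pmod r$ ``since $r$ is odd'' is wrong for $r = 3$: there $\sum_{\xi=0}^{2}\xi^2 = 5 \equiv 2 \pmod 3$, and indeed $\det(U) = \theta$ (Lemma \ref{lemma:detlemma}), so $G \not\le \SL(W)$ when $(r,\ell) = (3,1)$. The correct divisibility $(r-1)r(2r-1)/6 \equiv 0 \pmod r$ needs $\gcd(r,6)=1$, i.e.\ $r > 3$. This is not incidental: $\Sp_2(3)$ is the unique non-perfect case, its abelianization has order $3$ (visible precisely in that determinant), and this is why the paper must treat $(r,\ell) = (3,1)$ by a separate presentation argument --- a case split your proposal would also be forced into once the error is repaired.
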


Thus by Theorem \ref{thm:mainthm}, with~\eqref{eq:ABCDE} and~\eqref{eq:U} we can write down generating matrices for $G$. We can then use this to get the action of every $g \in \Sp_{2\ell}(r)$ in the Weil representation --- see Remark \ref{remark:generatorswrite}.

One can also describe the submodules of $G$ on $W$ explicitly, thus giving the action of the generators of $G$ on the \emph{irreducible Weil representations}, which are irreducible representations of $\Sp_{2\ell}(r)$ of degree $(r^{\ell} \pm 1)/2$. Specifically, if $\operatorname{char} \F \neq 2$, then $W$ is a direct sum of two irreducible $\F[G]$-submodules of degrees $(r^{\ell} + 1)/2$ and $(r^{\ell}-1)/2$. And if $\operatorname{char} \F = 2$, then $W$ is uniserial with composition factors of degrees $(r^{\ell}-1)/2$, $1$, and $(r^{\ell}-1)/2$. We discuss the $\F[G]$-submodules of $W$ in Section \ref{section:submodules}.

About the proof of Theorem \ref{thm:mainthm}, we note that since $U_t = A_t^{(r+1)/2} E_t$, it is clear from Theorem \ref{thm:jordanthm} that the image of $G$ under the map $\pi$ is equal to $\Sp_{2\ell}(r)$. Thus to prove Theorem \ref{thm:mainthm}, it suffices to check that $G \cong \Sp_{2\ell}(r)$. This will be done in Section \ref{section:mainthm}, where our strategy for the proof of Theorem \ref{thm:jordanthm} is as follows. For $(r,\ell) = (3,1)$ we will verify that the generators of $G$ satisfy relations defining $\Sp_2(3) = \SL_2(3)$. In the case $(r,\ell) \neq (3,1)$, by calculations with the generators we will check that $G$ is a perfect central extension of $\Sp_{2\ell}(r)$. Since $\Sp_{2\ell}(r)$ has trivial Schur multiplier \cite[Theorem 1.1]{Steinberg1981}, we can then conclude that $G \cong \Sp_{2\ell}(r)$.

\begin{remark}
Here the determinant involved in the definition~\eqref{eq:lambda} of $\lambda$ is a Vandermonde determinant, so $$\det(C) = \prod_{0 \leq i < j < r} (\theta^j - \theta^i).$$ Over $\F = \C$, the precise evaluation of this determinant goes back to Schur, who used it to determine the value of the quadratic Gauss sum. Schur proved in \cite[p. 149]{Schur1921} that for $\theta = \exp(2\pi i / r)$ we have $\det(C) = r^{r/2} i^{r(r-1)/2}$, see for example \cite[pp. 211--212]{Landau1958}. (There is a version of this formula that works when $r$ is replaced by an arbitrary positive integer, see for example \cite[Section 6.3, p. 109]{Rose1994})

Over an arbitrary field $\F$ containing a primitive $r$th root of unity, we have $\det(C)^2 = r^r (-1)^{r(r-1)/2}$, so $\det(C)$ is still determined up to a sign as the square root of $r^r (-1)^{r(r-1)/2}$. We also have the formula $$\det(C) = \legendre{2}{r} r^{(r-1)/2} \sum_{0 \leq i < r} \theta^{i^2},$$ where $\legendre{a}{r}$ is the Legendre symbol --- see Remark \ref{remark:detformula}. 

\end{remark}

\begin{remark}\label{remark:generatorswrite}
Let $e_i$ and $f_i$ be the images in $R/Z(R)$ for $A_i$ and $B_i$, respectively. We have $[A_i,B_j] = \theta^{\delta_{i,j}}$ and $[A_i,A_j] = [B_i,B_j] = 1$ for all $1 \leq i,j \leq \ell$, so with respect to the alternating bilinear form defined on $R/Z(R)$ the vectors $e_1$, $f_1$, $\ldots$, $e_{\ell}$, $f_{\ell}$ form a basis of hyperbolic pairs. With respect to this basis, the action of $C_t$, $D_{st}$, and $E_t$ is given as follows: \begin{align*} \pi(C_t)&: \begin{cases} e_t \mapsto -f_t \\ f_t \mapsto e_t \end{cases} & \pi(D_{st}): \begin{cases} f_t \mapsto f_t + e_s \\ f_s \mapsto f_s + e_t \end{cases} \\ \pi(E_t)&: f_t \mapsto f_t + e_t \end{align*} with all the other basis vectors fixed (see for example \cite[(3.1), p. 74]{KorhonenBook}). Note that $\pi(E_t) = \pi(U_t)$ for all $1 \leq t \leq \ell$.

By Theorem \ref{thm:jordanthm}, we know that the elements $\pi(C_t)$, $\pi(D_{st})$, $\pi(E_t)$ generate $\Sp_{2\ell}(r)$. (This was first proven by Jordan in \cite[Th\'eor\`eme 221, p. 174]{jordantraite}.) Moreover, there is a fairly straightforward algorithm for writing down a given element of $\Sp_{2\ell}(r)$ as a product of the generators $\pi(C_t)$, $\pi(D_{st})$, and $\pi(E_t)$. We omit the details, but such an algorithm can be extracted for example from the proof of \cite[Theorem 3.2.2]{KorhonenBook}.

It follows then that for a given element $g \in \Sp_{2\ell}(r)$ written as a word in the generators $\pi(C_t)$, $\pi(D_{st})$, $\pi(E_t) = \pi(U_t)$, evaluating the corresponding word in the generators $\lambda C_t$, $D_{st}$, $U_t$ gives a matrix $g' \in G$ which is the action of $g$ in the Weil representation of $\Sp_{2\ell}(r)$. Furthermore, using the description of the $\F[G]$-submodules of $W$ discussed in Section \ref{section:submodules}, one can similarly calculate the action of $g$ on the irreducible Weil representations of degrees $(r^{\ell} \pm 1)/2$.
\end{remark}

\begin{remark}\label{remark:nonprimefield}
Let $s \geq 1$ be an integer. More generally, one defines Weil representations of $\Sp_{2\ell}(r^s)$ as follows. We can consider $\Sp_{2\ell}(r^s)$ as a field extension subgroup $\Sp_{2\ell}(r^s) \leq \Sp_{2\ell s}(r)$, see for example \cite[Section 4.3]{KleidmanLiebeck} for the definition and construction. Then the Weil representations of $\Sp_{2\ell}(r^s)$ can be defined as the restriction of the Weil representations of $\Sp_{2\ell s}(r)$, and it is known that $\Sp_{2\ell s}(r)$ and $\Sp_{2\ell}(r^s)$ have the same submodules on $W$. Moreover, generators for $\Sp_{2\ell}(r^s)$ as a subgroup of $\Sp_{2\ell s}(r)$ are well known, and have been implemented in Magma \cite[Proposition 6.4]{HoltRoneyDougal2005}. Using this embedding $\Sp_{2\ell}(r^s) \leq \Sp_{2\ell s}(r)$, we have also included the construction of Weil representations of $\Sp_{2\ell}(r^s)$ in the Magma code available at \cite{KorhonenGithub}.

Similarly, with the embeddings $\GL_{\ell}(r^s) \leq \Sp_{2\ell}(r^s)$ and $\GU_{\ell}(r^s) \leq \Sp_{2\ell}(r^s)$, we can define Weil representations of finite general linear groups and unitary groups. Explicit matrix generators for these subgroups of $\Sp_{2\ell}(r^s)$ are also known and implemented in Magma, see \cite[Proposition 5.3, Proposition 6.5]{HoltRoneyDougal2005}. Thus one can similarly write down matrix generators for the Weil representations of $\GL_{\ell}(r^s)$ and $\GU_{\ell}(r^s)$ over finite fields.
\end{remark}

\begin{acknow*}
The author would like to thank Jay Taylor and an anonymous referee for their useful comments and suggestions. The work in this paper was done while the author was working at the Shenzhen International Center for Mathematics at the Southern University of Science and Technology.
\end{acknow*}

\section{Some preliminary calculations}

In this section, we provide some basic properties of the generators of the group $G$ from Theorem \ref{thm:mainthm}. All except the last lemma of this section are about the case $\ell = 1$, in which case $G = \langle \lambda C, U \rangle$.

\begin{lemma}\label{lemma:C2}
For all $0 \leq \xi < r$, we have $C^2 v_{\xi} = r v_{-\xi}$. 
\end{lemma}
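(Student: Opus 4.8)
The plan is to compute $C^2$ directly from its action on the basis vectors, since the definition in~\eqref{eq:maps1} gives $C$ explicitly. First I would apply $C$ once to a basis vector $v_\xi$, obtaining $C v_\xi = \sum_{0 \leq i < r} \theta^{i\xi} v_i$. Then I would apply $C$ again, using linearity, to get
\begin{equation*}
C^2 v_\xi = \sum_{0 \leq i < r} \theta^{i\xi} C v_i = \sum_{0 \leq i < r} \theta^{i\xi} \sum_{0 \leq j < r} \theta^{ji} v_j = \sum_{0 \leq j < r} \left( \sum_{0 \leq i < r} \theta^{i(\xi + j)} \right) v_j.
\end{equation*}

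The key step is then to evaluate the inner geometric sum $\sum_{0 \leq i < r} \theta^{i(\xi+j)}$. Since $\theta$ is a primitive $r$th root of unity, this sum equals $r$ when $\xi + j \equiv 0 \pmod r$ and vanishes otherwise, by the standard orthogonality of roots of unity (the sum of all $r$th roots of unity is zero). Here I must be slightly careful that this argument is valid over an arbitrary field $\F$ containing a primitive $r$th root of unity, not just over $\C$: if $\xi + j \not\equiv 0$, then $\theta^{\xi+j} \neq 1$, and multiplying the sum $S = \sum_i \theta^{i(\xi+j)}$ by $\theta^{\xi+j}$ leaves it unchanged (it is a cyclic shift of the same terms), so $(\theta^{\xi+j} - 1) S = 0$ forces $S = 0$. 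When $\xi + j \equiv 0$, every term is $1$ and the sum is $r$ (viewed as $r \cdot 1_{\F}$).

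Consequently, the only surviving term in the outer sum is the one with $j \equiv -\xi \pmod r$, giving $C^2 v_\xi = r\, v_{-\xi}$, where the index $-\xi$ is interpreted modulo $r$ as stipulated after~\eqref{eq:maps1}. This completes the proof. I do not anticipate any genuine obstacle here; the computation is routine, and the only point requiring mild attention is ensuring the orthogonality relation for the exponential sum is justified purely algebraically over the arbitrary field $\F$, rather than appealing to a complex-analytic argument.
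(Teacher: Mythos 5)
Your proposal is correct and follows essentially the same route as the paper: both compute the entries of $C^2$ as the geometric sum $\sum_{0 \leq i < r} \theta^{i(\xi+j)}$ and use orthogonality of $r$th roots of unity to see that only the term with $j \equiv -\xi \pmod r$ survives, contributing the factor $r$. Your extra remark justifying the vanishing of the sum purely algebraically over an arbitrary field $\F$ (via $(\theta^{\xi+j}-1)S = 0$ with $\theta^{\xi+j} \neq 1$) is a sound touch, though the paper's phrasing --- that the sum runs over all $r$th roots of unity, whose sum is zero since $r$ is prime --- is equally valid over any field.
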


\begin{proof}
For $0 \leq i,j < r$, the $(i,j)$ entry of the matrix of $C^2$ (with respect to the basis $v_0$, $v_1$, $\ldots$, $v_{r-1}$) is equal to \begin{equation}\label{eq:c2sum}\sum_{0 \leq \xi < r} \theta^{i \xi + \xi j} = \sum_{0 \leq \xi < r} \theta^{\xi(i+j)}.\end{equation} Now if $i+j \not\equiv 0 \mod{r}$, then~\eqref{eq:c2sum} is the sum of all $r$th roots of unity, and thus equals zero. If $i+j \equiv 0 \mod{r}$, then~\eqref{eq:c2sum} evaluates to $\sum_{0 \leq \xi < r} 1 = r$. From this the claim of the lemma follows.
\end{proof}

\begin{lemma}\label{lemma:detC2}
We have $\det(C^2) = (-1)^{(r-1)/2} r^r$, and $(\lambda C)^2 v_{\xi} = (-1)^{(r-1)/2} v_{-\xi}$ for all $0 \leq \xi < r$.
\end{lemma}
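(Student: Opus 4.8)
The plan is to prove Lemma \ref{lemma:detC2} by reducing both assertions to the computation of $C^2$ already performed in Lemma \ref{lemma:C2}. The statement has two parts: a determinant computation for $C^2$, and the description of $(\lambda C)^2$ as a scalar multiple of the permutation $v_\xi \mapsto v_{-\xi}$.

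For the first part, I would observe that Lemma \ref{lemma:C2} tells us $C^2$ acts as $v_\xi \mapsto r v_{-\xi}$, so as a matrix $C^2 = r P$, where $P$ is the permutation matrix of the involution $\xi \mapsto -\xi$ on $\Z/r\Z$. This permutation fixes $0$ and pairs up the remaining $r-1$ indices into $(r-1)/2$ transpositions, so $P$ is a product of $(r-1)/2$ disjoint transpositions and hence $\det(P) = (-1)^{(r-1)/2}$. Since $P$ is a matrix of size $r \times r$, we get $\det(C^2) = r^r \det(P) = (-1)^{(r-1)/2} r^r$, which is the first claim.

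For the second part, the definition~\eqref{eq:lambda} of $\lambda$ gives $\lambda = (-r)^{(r-1)/2} \det(C)^{-1}$, so $\lambda^2 = (-r)^{r-1} \det(C)^{-2}$. Here I would use that $\det(C)^2 = \det(C^2) = (-1)^{(r-1)/2} r^r$ from the first part. Thus
\[
\lambda^2 = \frac{(-1)^{r-1} r^{r-1}}{(-1)^{(r-1)/2} r^r} = \frac{(-1)^{(r-1)/2}}{r},
\]
using that $r-1$ is even so $(-1)^{r-1} = 1$. Then $(\lambda C)^2 = \lambda^2 C^2$ acts on $v_\xi$ by the scalar $\lambda^2 \cdot r = (-1)^{(r-1)/2}$ times the map $v_\xi \mapsto v_{-\xi}$, which gives exactly $(\lambda C)^2 v_\xi = (-1)^{(r-1)/2} v_{-\xi}$, as required.

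This lemma is essentially bookkeeping and I do not expect any genuine obstacle. The only point requiring a little care is the sign arithmetic: one must keep track of the parity of $(r-1)/2$ in $\lambda^2$ and confirm that the factor $\det(C)^{-2}$ correctly cancels against the power of $r$ coming from $C^2 = rP$. Since $r$ is an odd prime, $r-1$ is even and the relevant signs behave as above, so the computation closes cleanly.
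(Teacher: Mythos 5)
Your proposal is correct and follows essentially the same route as the paper: write $C^2 = rP$ via Lemma \ref{lemma:C2}, read off $\det(P) = (-1)^{(r-1)/2}$ from the $(r-1)/2$ transpositions of $\xi \mapsto -\xi$, and then compute $\lambda^2 = (-1)^{(r-1)/2} r^{-1}$ to conclude. Your write-up of the $\lambda^2$ computation is in fact slightly more careful than the paper's, which contains a harmless typo ($r^r$ where $r^{r-1}$ is meant) at that step.
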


\begin{proof}
By Lemma \ref{lemma:C2}, we have $C^2 = r P$, where $P$ is a permutation matrix that fixes the basis vector $v_0$ and swaps $v_i$ with $v_{r-i}$ for $0 < i < r$. The permutation corresponding to $P$ is a product of $(r-1)/2$ transpositions, so we conclude that $\det(C^2) = (-1)^{(r-1)/2} r^r$.

For the second claim, note that $\lambda^2 = (-r)^{r-1} \det(C)^{-2} = r^{-1} (-1)^{(r-1)/2}$ by the first claim. Thus $(\lambda C)^2 v_{\xi} = (-1)^{(r-1)/2} v_{-\xi}$ follows from Lemma \ref{lemma:C2}.
\end{proof}

\begin{lemma}\label{lemma:detlemma}
We have $\det(\lambda C) = 1$. Moreover $\det(U) = 1$ if $r > 3$, and $\det(U) = \theta$ if $r = 3$.
\end{lemma}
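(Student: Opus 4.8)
The plan is to treat the two assertions separately, reducing each to an explicit determinant of a scalar multiple of $C$ or of a diagonal matrix, and then to a parity/congruence bookkeeping exercise. In both cases the key inputs are only $\theta^r = 1$ and the value of $\det(C)^2$ from Lemma~\ref{lemma:detC2}, so nothing will depend on a particular choice of root of unity and the argument will be valid over any admissible $\F$.

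For $\det(\lambda C) = 1$, I would start from the fact that $\lambda C$ is the scalar $\lambda$ times the $r \times r$ matrix $C$, so $\det(\lambda C) = \lambda^r \det(C)$. Substituting $\lambda = (-r)^{(r-1)/2}\det(C)^{-1}$ from~\eqref{eq:lambda} gives $\det(\lambda C) = (-r)^{r(r-1)/2}\det(C)^{-(r-1)}$. Since $r - 1$ is even, I can rewrite $\det(C)^{-(r-1)} = (\det(C)^2)^{-(r-1)/2}$ and invoke Lemma~\ref{lemma:detC2} (which gives $\det(C)^2 = \det(C^2) = (-1)^{(r-1)/2}r^r$) to express everything as a power of $-1$ times a power of $r$. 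The powers of $r$ cancel identically, and after collecting the signs the exponent of $-1$ simplifies to $\tfrac{r^2-1}{4}$.

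The only genuine content then is to observe that this exponent is even: writing $r = 2k+1$, we have $\tfrac{r^2-1}{4} = k(k+1)$, a product of two consecutive integers, hence even, so $\det(\lambda C) = (-1)^{(r^2-1)/4} = 1$. For $\det(U)$, I would use the factorization $U = A^{(r+1)/2}E$ and multiplicativity, $\det(U) = \det(A)^{(r+1)/2}\det(E)$. Both $A$ and $E$ are diagonal on the basis $v_0, \ldots, v_{r-1}$, so their determinants are powers of $\theta$ whose exponents I reduce modulo $r$. First, $\det(A) = \theta^{\sum_{\xi} \xi} = \theta^{r(r-1)/2} = 1$, since $r \mid r(r-1)/2$; hence $\det(U) = \det(E)$. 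Next, using $\xi(\xi-1)/2 = \binom{\xi}{2}$ and the hockey-stick identity $\sum_{\xi=0}^{r-1}\binom{\xi}{2} = \binom{r}{3}$, I get $\det(E) = \theta^{\binom{r}{3}}$. The case split is then a divisibility statement about $\binom{r}{3} = \tfrac{r(r-1)(r-2)}{6}$: for a prime $r > 3$ one has $\gcd(6,r) = 1$ and $6 \mid (r-1)(r-2)$, so $r \mid \binom{r}{3}$ and $\det(E) = 1$; whereas for $r = 3$ one has $\binom{3}{3} = 1$, giving $\det(E) = \theta$. Combining, $\det(U) = 1$ for $r > 3$ and $\det(U) = \theta$ for $r = 3$.

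The main obstacle I anticipate is not conceptual but bookkeeping: correctly carrying the signs and the various exponents through the $\lambda C$ computation, where a single sign error would flip the answer, and cleanly justifying the two modular reductions of the $\theta$-exponents for $\det(U)$. Both rest on elementary facts about the prime $r$, but they must be stated carefully to make the distinction at $r = 3$ transparent.
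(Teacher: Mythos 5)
Your proposal is correct and follows essentially the same route as the paper: for $\det(\lambda C)$ you substitute the definition~\eqref{eq:lambda} and reduce to the value of $\det(C)^2$ from Lemma~\ref{lemma:detC2} plus a parity check (your exponent $(r^2-1)/4 = k(k+1)$ matches the paper's $\bigl(-r^r\det(C)^{-2}\bigr)^{(r-1)/2} = 1$), and for $\det(U)$ you compute the exponent of $\theta$ and check divisibility by $r$, just as the paper does. The only cosmetic difference is that you factor $U = A^{(r+1)/2}E$ and use $\sum_{\xi}\binom{\xi}{2} = \binom{r}{3}$, where the paper works with $\sum_{\xi}\xi(\xi+r)/2$ directly via $\sum_{\xi}\xi^2 = (r-1)r(2r-1)/6$ and evaluates $U = \diag(1,\theta^2,\theta^2)$ explicitly when $r=3$; both reductions are equivalent bookkeeping.
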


\begin{proof}
First note that $$\det(\lambda C) = \lambda^r \det(C) = (-r)^{r(r-1)/2} \det(C)^{-r} \det(C) = \left(-r^r \det(C)^{-2}\right)^{(r-1)/2}.$$ Since $\det(C)^2 = (-1)^{(r-1)/2} r^r$ (Lemma \ref{lemma:detC2}), we get that $\det(\lambda C) = 1$. 

For the determinant of $U$, we have $$\det(U) = \prod_{0 \leq \xi < r} \theta^{\frac{\xi(\xi+r)}{2}} = \theta^{\sum_{0 \leq \xi < r} \xi(\xi+r)/2}.$$ Now $\sum_{0 \leq \xi < r} \xi^2 = (r-1)r(2r-1)/6$ is divisible by $r$ if $r > 3$, so it follows that $\det(U) = 1$. For $r = 3$ we have $U$ equal to a diagonal matrix $\operatorname{diag}(1, \theta^2, \theta^2)$, so $\det(U) = \theta$.
\end{proof}

\begin{lemma}\label{lemma:tracesquare}
We have $\operatorname{Tr}(U)^2 = (-1)^{(r-1)/2} r$.
\end{lemma}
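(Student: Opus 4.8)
The plan is to exploit that $U$ is a diagonal matrix, so that $\operatorname{Tr}(U)$ is a quadratic Gauss sum, and then to evaluate the \emph{square} of this Gauss sum by an elementary character-sum argument valid over any field containing a primitive $r$th root of unity.

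First I would read off from~\eqref{eq:U} that $U$ is diagonal, so that $\operatorname{Tr}(U) = \sum_{0 \leq \xi < r} \theta^{\xi(\xi+r)/2}$. Since $\theta^r = 1$, I reduce the exponent modulo $r$: because $2$ is invertible modulo the odd prime $r$ with $2^{-1} \equiv (r+1)/2 \pmod{r}$, one checks that $\tfrac{\xi(\xi+r)}{2} \equiv 2^{-1}\xi^2 \pmod{r}$, whence $\operatorname{Tr}(U) = \sum_{0 \leq \xi < r} \theta^{2^{-1}\xi^2}$. Thus the statement reduces to computing the square of a quadratic Gauss sum $g_c = \sum_{\xi} \theta^{c\xi^2}$ with $c = 2^{-1}$. (As an aside tying into the paper's theme, note that $g_1 = \sum_\xi \theta^{\xi^2} = \operatorname{Tr}(C)$, the Gauss sum that also appears in the evaluation of $\det(C)$.)

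Next I would convert $g_c$ into the standard Gauss sum over the Legendre symbol. Using that for each $y$ the number of $\xi$ with $\xi^2 = y$ equals $1 + \legendre{y}{r}$ (with the convention $\legendre{0}{r} = 0$), together with the orthogonality relation $\sum_{0 \leq y < r} \theta^{cy} = 0$ — which holds because $c \not\equiv 0$ forces $\theta^c \neq 1$, hence $\theta^c - 1$ invertible — I get $g_c = \sum_y \legendre{y}{r}\theta^{cy} = \legendre{c}{r}\, g$ after the substitution $y \mapsto c^{-1}y$, where $g = \sum_y \legendre{y}{r}\theta^y$. Since $\legendre{c}{r} = \legendre{2}{r} = \pm 1$, squaring gives $\operatorname{Tr}(U)^2 = g^2$. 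Finally I evaluate $g^2 = \sum_{y,w} \legendre{yw}{r}\theta^{y+w}$ by the classical manipulation: substituting $w = yt$ for $y \neq 0$ and using $\sum_y \theta^{y(1+t)} \in \{r, 0\}$ together with $\sum_t \legendre{t}{r} = 0$, one obtains $g^2 = \legendre{-1}{r}\, r = (-1)^{(r-1)/2} r$, which is the claim.

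The only real obstacle is to keep every step characteristic-free, rather than importing the familiar complex-analytic evaluation of the Gauss sum. All the ingredients — orthogonality of the powers of $\theta$, the counting formula $1 + \legendre{y}{r}$, and $\sum_t \legendre{t}{r} = 0$ — are identities over $\mathbb{Z}[\zeta]$ for $\zeta$ a primitive $r$th root of unity, and they specialize through the ring homomorphism $\zeta \mapsto \theta$ into $\F$. In particular the case $\operatorname{char} \F = 2$ needs no separate treatment: there $(-1)^{(r-1)/2} = 1$ and the formula reads $\operatorname{Tr}(U)^2 = r$, consistent with $\legendre{-1}{r} = 1$ in $\F$. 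I would also double-check the half-integer exponent reduction once, since that is the one place where a stray factor of $2$ or sign could slip in.
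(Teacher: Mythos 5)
Your proof is correct, and it takes a genuinely different route from the paper. The paper expands $\operatorname{Tr}(U)^2$ directly as the double sum $\sum_{0 \leq i,j < r} \theta^{(i^2+j^2+(i+j)r)/2}$, groups terms by the value of $i^2+j^2 \bmod r$, and then quotes the classical counts (going back to Jordan, cited to Lidl--Niederreiter) of representations of $a$ as a sum of two squares modulo $r$: $\alpha = 2r-1$ or $1$ solutions for $a \equiv 0$ according as $r \equiv 1$ or $3 \pmod 4$, and $\beta = r + (-1)^{(r+1)/2}$ for $a \not\equiv 0$, so that the sum collapses to $\alpha - \beta = (-1)^{(r-1)/2} r$. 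You instead identify $\operatorname{Tr}(U)$ as the quadratic Gauss sum $\sum_\xi \theta^{2^{-1}\xi^2} = \legendre{2}{r}\, g$ with $g = \sum_y \legendre{y}{r}\theta^y$, and evaluate $g^2 = \legendre{-1}{r}\, r$ by the standard substitution $w = yt$; every identity you use (orthogonality, the count $1 + \legendre{y}{r}$ of square roots, $\sum_t \legendre{t}{r} = 0$) is proved inline and is characteristic-free, so your argument is fully self-contained where the paper leans on an external counting lemma --- indeed, that lemma is itself usually proved by exactly the Legendre-symbol manipulation you carry out, so you have in effect inlined the cited result. A pleasant by-product of your route is the intermediate identity $\operatorname{Tr}(U) = \legendre{2}{r} \sum_i \theta^{i^2}$, which is precisely the conversion the paper uses later in Remark \ref{remark:detformula}. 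The one step you flagged for rechecking is fine: $\frac{\xi(\xi+r)}{2} - \frac{(r+1)\xi^2}{2} = \frac{r\,\xi(1-\xi)}{2} \equiv 0 \pmod r$ since $\xi(1-\xi)$ is even, so no stray factor or sign enters; and your treatment of $\operatorname{char} \F = 2$ (no special case needed, as the whole computation lives in $\F$, or in $\mathbb{Z}[\zeta]$ specialized via $\zeta \mapsto \theta$) is likewise sound.
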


\begin{proof}
First note that $\operatorname{Tr}(U)^2$ equals \begin{equation}\label{eq:doublesum}\left(\sum_{0 \leq i < r } \theta^{\frac{i(i+r)}{2}} \right)^2 = \sum_{0 \leq i,j < r} \theta^{\frac{i(i+r)}{2} + \frac{j(j+r)}{2}} = \sum_{0 \leq i,j < r} \theta^{\frac{i^2+j^2 + (i+j)r}{2}}\end{equation} Here the value of each summand is determined by the value of $i^2 + j^2$ modulo $r$. We now make use of classical formulae for the number of solutions to $i^2+j^2 \equiv a \mod{r}$, which go back at least to the work of Jordan \cite[\S 199 -- 200, pp. 159 -- 161]{jordantraite}, see for example \cite[Lemma 6.24]{LidlNiederreiter}.

For the equation $i^2 + j^2 \equiv 0 \mod{r}$, the number of solutions $(i,j)$ with $0 \leq i,j < r$ is $\alpha = r + (-1)^{(r-1)/2}(r-1)$. Similarly for fixed $a \not\equiv 0 \mod{r}$, there are $\beta = r + (-1)^{(r+1)/2}$ solutions to $i^2+j^2 \equiv a \mod{r}$. Thus~\eqref{eq:doublesum} evaluates to $$\alpha + \beta(\theta + \theta^2 + \cdots + \theta^{r-1}) = \alpha - \beta.$$ Since $\alpha - \beta = (-1)^{(r-1)/2} r$, the lemma follows.
\end{proof}

\begin{lemma}\label{lemma:detD}
Suppose that $\ell \geq 2$. Then $\det(D_{st}) = 1$ for all $1 \leq s < t \leq \ell$.
\end{lemma}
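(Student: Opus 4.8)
The plan is to compute $\det(D_{st})$ directly from the diagonal description of $D_{st}$ given in~\eqref{eq:ABCDE}. Since $D_{st}$ acts on the basis vector $v_{\xi_1, \ldots, \xi_{\ell}}$ by the scalar $\theta^{\xi_s \xi_t}$, the matrix of $D_{st}$ is diagonal, and hence its determinant is the product of all these scalars. Thus
\begin{equation*}
\det(D_{st}) = \prod_{0 \leq \xi_1, \ldots, \xi_{\ell} < r} \theta^{\xi_s \xi_t} = \theta^{N}, \quad \text{where } N = \sum_{0 \leq \xi_1, \ldots, \xi_{\ell} < r} \xi_s \xi_t.
\end{equation*}
The task reduces to showing that $N \equiv 0 \pmod r$, so that $\theta^N = 1$.

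First I would note that in the sum defining $N$, the indices $\xi_i$ for $i \neq s, t$ range freely and contribute only a multiplicative factor of $r^{\ell - 2}$ (the number of choices for those coordinates). So $N = r^{\ell-2} \sum_{0 \leq \xi_s, \xi_t < r} \xi_s \xi_t$. Next, the inner double sum factors as a product of two independent single sums,
\begin{equation*}
\sum_{0 \leq \xi_s, \xi_t < r} \xi_s \xi_t = \left( \sum_{0 \leq \xi_s < r} \xi_s \right)\left( \sum_{0 \leq \xi_t < r} \xi_t \right) = \left( \frac{r(r-1)}{2} \right)^2.
\end{equation*}
Therefore $N = r^{\ell - 2} \cdot r^2 (r-1)^2 / 4 = r^{\ell} (r-1)^2 / 4$, which is manifestly divisible by $r$ since $\ell \geq 2$ and $r$ is odd (so the division by $4$ causes no problem). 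Hence $\theta^N = 1$ and $\det(D_{st}) = 1$.

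There is no real obstacle here, as the computation is elementary and the factorization of the sum is immediate; the only points requiring a moment of care are confirming that $r^{\ell-2}$ is a genuine integer factor (which holds precisely because the hypothesis $\ell \geq 2$ guarantees $\ell - 2 \geq 0$), and that dividing $r(r-1)/2$ by $2$ is valid over the integers since $r$ is odd makes $r-1$ even. I would write the argument compactly, emphasizing only that $D_{st}$ is diagonal and that the exponent sum is divisible by $r$.
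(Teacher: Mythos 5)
Your proof is correct and is essentially the same as the paper's: both compute $\det(D_{st})$ as the product of the diagonal entries $\theta^{\xi_s\xi_t}$, factor the exponent as $r^{\ell-2}\bigl(r(r-1)/2\bigr)^2$, and conclude it is divisible by $r$ so that the determinant is $1$. Your extra remarks on why $\ell \geq 2$ and the oddness of $r$ matter are accurate but not strictly needed, since $\bigl(r(r-1)/2\bigr)^2$ is already divisible by $r$ on its own.
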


\begin{proof}
From the definition of $D_{st}$ in~\eqref{eq:ABCDE}, one computes directly that $$\det(D_{st}) = \theta^{r^{\ell-2} \sum_{0 \leq \xi_s,\xi_t < r} \xi_s \xi_t} = \theta^{r^{\ell-2} (r(r-1)/2)^2} = 1,$$ as required.
\end{proof}

\section{Basic properties of \texorpdfstring{$G$}{G}}

Let $G$ be the subgroup generated by $\lambda C_t$, $U_t$ (for $1 \leq t \leq \ell$) and $D_{st}$ for $1 \leq s < t \leq \ell$, as in Theorem \ref{thm:mainthm}.

\begin{lemma}\label{lemma:tinvolution}
Let $\sigma: W \rightarrow W$ be the involution defined by $v_{\xi_1, \ldots, \xi_{\ell}} \mapsto v_{-\xi_1, \ldots, -\xi_{\ell}}$. Then the following statements hold:

	\begin{enumerate}[\normalfont (i)]
		\item $\sigma$ normalizes $R$, and acts on $R/Z(R)$ by inversion.
		\item $C_R(\sigma) = Z(R)$.
		\item $\sigma = (-1)^{\ell(r-1)/2} (\lambda C_1)^2 \cdots (\lambda C_{\ell})^2$.
		\item $G$ centralizes $\sigma$.
	\end{enumerate}
	
\end{lemma}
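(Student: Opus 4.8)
The plan is to exploit the fact that $\sigma$ is the tensor product $\tau \otimes \cdots \otimes \tau$ of the single-variable involution $\tau \colon v_\xi \mapsto v_{-\xi}$, so that its interaction with each generator can be read off from how negating an index affects $A_t$, $B_t$, $C_t$, $D_{st}$, $U_t$. Since $\sigma^2 = 1$, conjugation by $\sigma$ agrees with the map $g \mapsto \sigma g \sigma$. First I would record, by evaluating on a basis vector, the two identities
$$\sigma A_t \sigma = A_t^{-1}, \qquad \sigma B_t \sigma = B_t^{-1};$$
for $A_t$ the eigenvalue $\theta^{\xi_t}$ turns into $\theta^{-\xi_t}$, and for $B_t$ incrementing the $t$-th index between two negations becomes decrementing it. As $R = \langle A_t, B_t \rangle$, these show that $\sigma$ normalizes $R$. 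Because $R/Z(R)$ is abelian and generated by the images of the $A_t$ and $B_t$, an automorphism inverting each of these generators must be inversion on the whole quotient, which gives (i).

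Part (ii) then follows quickly from (i): $Z(R) \subseteq C_R(\sigma)$ because $Z(R)$ consists of scalar matrices, while if $x \in C_R(\sigma)$ its image $\bar x \in R/Z(R)$ is fixed under inversion, so $\bar x^2 = 1$; since $R/Z(R)$ has exponent $r$ and $r$ is odd, this forces $\bar x = 1$ and hence $x \in Z(R)$. For (iii) I would appeal to Lemma~\ref{lemma:detC2}, which gives $(\lambda C)^2 v_\xi = (-1)^{(r-1)/2} v_{-\xi}$. Writing $(\lambda C_t)^2 = I_{r^{t-1}} \otimes (\lambda C)^2 \otimes I_{r^{\ell-t}}$, each such factor negates the $t$-th index and contributes a scalar $(-1)^{(r-1)/2}$; since these operators act on distinct tensor factors, their product over $t = 1, \ldots, \ell$ negates all indices and contributes the scalar $(-1)^{\ell(r-1)/2}$. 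Comparing with the definition of $\sigma$ yields the displayed identity.

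For (iv) I would check directly that each generator $\lambda C_t$, $D_{st}$, $U_t$ commutes with $\sigma$, by computing $\sigma g \sigma$ on a basis vector. For $C_t$ the substitution $i \mapsto -i$ in the defining sum gives $\sigma C_t \sigma = C_t$, and for the diagonal operator $D_{st}$ one uses $\theta^{(-\xi_s)(-\xi_t)} = \theta^{\xi_s \xi_t}$. I expect the one genuinely delicate point to be $U_t$, whose eigenvalue $\theta^{\xi_t(\xi_t+r)/2}$ is not visibly invariant under $\xi_t \mapsto -\xi_t$: the key is the identity $(-\xi_t)(-\xi_t+r)/2 = \xi_t(\xi_t+r)/2 - r\xi_t$ together with $\theta^{-r\xi_t} = 1$, which makes the extra factor disappear. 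Granting these three commutations, $G = \langle \lambda C_t, D_{st}, U_t \rangle$ centralizes $\sigma$. I would deliberately avoid trying to derive (iv) from (iii): the natural shortcut, observing that $\sigma g \sigma^{-1}$ and $\sigma$ induce the same (inversion) action on $R/Z(R)$, only yields $\sigma g \sigma^{-1} \in \sigma \cdot \operatorname{Ker}\pi$ via~\eqref{eq:kerpi}, which is weaker than equality and would also invoke the not-yet-established injectivity of $\pi$ on $G$, so the direct computation is both cleaner and logically safer.
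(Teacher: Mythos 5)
Your proposal is correct and follows essentially the same route as the paper's proof: the same direct computations $\sigma A_t \sigma = A_t^{-1}$, $\sigma B_t \sigma = B_t^{-1}$ for (i), the same squaring argument in $R/Z(R)$ for (ii), the same appeal to Lemma~\ref{lemma:detC2} and the tensor factorization for (iii), and a generator-by-generator check for (iv) using exactly the identities $\xi\xi' = (-\xi)(-\xi')$ and $\xi(\xi+r)/2 \equiv (-\xi)(-\xi+r)/2 \pmod{r}$. The only (immaterial) difference is that for $\lambda C_t$ in (iv) you verify $\sigma C_t \sigma = C_t$ directly via the substitution $i \mapsto -i$, whereas the paper deduces it from (iii) and the commutativity of the $C_t$'s; both are fine, and your cautionary remark about not inferring (iv) from the action on $R/Z(R)$ alone is well taken.
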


\begin{proof}
A direct calculation using the definition~\eqref{eq:ABCDE} shows that $\sigma A_i \sigma^{-1} = A_i^{-1}$ and $\sigma B_i \sigma^{-1} = B_i^{-1}$ for all $1 \leq i \leq \ell$, from which claim (i) follows.

For (ii), first note that $Z(R) \leq C_R(\sigma)$ since $Z(R)$ consists of scalar matrices. For the other direction, suppose that $x \in C_R(\sigma)$. Then $x = \sigma x \sigma^{-1} = x^{-1}y$ for some $y \in Z(R)$ by (i), so $x^2 \in Z(R)$. It follows then from $r > 2$ that $x \in Z(R)$. Thus we conclude that $C_R(\sigma) = Z(R)$.

Claim (iii) follows from Lemma \ref{lemma:detC2}. For (iv), we check that all the generators of $G$ are centralized by $\sigma$. First since the $C_t$'s commute with each other, it follows from (iii) that $\sigma$ centralizes $\lambda C_t$ for all $1 \leq t \leq \ell$. A direct calculation from the definition of $U_t$ shows that it is centralized by $\sigma$, using the fact that $\xi(\xi+r)/2 \equiv (-\xi)(-\xi+r)/2 \mod{r}$ for all $0 \leq \xi < r$. Similarly $D_{st}$ is centralized by $\sigma$, since $\xi \xi' = (-\xi)(-\xi')$ for all $0 \leq \xi,\xi' < r$.
\end{proof}

\begin{lemma}\label{lemma:Gprops}
Let $Z \leq \GL(W)$ be the group of scalar matrices. The following properties hold for $G$.
	\begin{enumerate}[\normalfont (i)]
		\item $G \cap RZ = G \cap Z$.
		\item $G/(G \cap Z) \cong \Sp_{2\ell}(r)$.
		\item $\det(g)^r = 1$ for all $g \in G$.
		\item $G/[G,G]$ is an $r$-group.
	\end{enumerate}
\end{lemma}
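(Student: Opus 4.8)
The plan is to prove the four parts in order, deducing (ii) and (iv) formally from (i) and (iii). For (i) the only nontrivial inclusion is $G \cap RZ \subseteq G \cap Z$, which I would obtain from the involution $\sigma$ of Lemma~\ref{lemma:tinvolution}. Given $g \in G \cap RZ$, write $g = xz$ with $x \in R$ and $z \in Z$; since $G$ centralizes $\sigma$ by part (iv) of that lemma and the scalar $z$ commutes with $\sigma$, the factor $x = gz^{-1}$ centralizes $\sigma$ as well. Hence $x \in C_R(\sigma) = Z(R) = \langle \theta I_W\rangle \leq Z$ by part (ii), so $g = xz \in Z$, while the reverse inclusion is clear from $Z \leq RZ$. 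Part (ii) is then immediate: the restriction $\pi|_G$ is surjective onto $\Sp_{2\ell}(r)$, as already noted from Theorem~\ref{thm:jordanthm} via $\pi(U_t) = \pi(E_t)$, its kernel is $G \cap \operatorname{Ker}\pi = G \cap RZ$, and this equals $G \cap Z$ by (i), so the first isomorphism theorem gives $G/(G\cap Z) \cong \Sp_{2\ell}(r)$.

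For (iii) I would compute the determinant of each generator directly and observe that it is an $r$th root of unity. Writing $\lambda C_t = I_{r^{t-1}} \otimes (\lambda C) \otimes I_{r^{\ell-t}}$ and using multiplicativity of determinants under Kronecker products gives $\det(\lambda C_t) = \det(\lambda C)^{r^{\ell-1}} = 1$ by Lemma~\ref{lemma:detlemma}; likewise $\det(U_t) = \det(U)^{r^{\ell-1}} \in \{1,\theta\}$ by the same lemma, and $\det(D_{st}) = 1$ by Lemma~\ref{lemma:detD}. Since $\theta$ generates the cyclic group of $r$th roots of unity in $\F^\times$, the image of the homomorphism $\det\colon G \to \F^\times$ lies in $\langle\theta\rangle$, whence $\det(g)^r = 1$ for all $g \in G$.

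Finally, (iv) combines (ii) and (iii). First (iii) forces $G \cap Z$ to be a finite $r$-group: a scalar $cI_W \in G \cap Z$ satisfies $\det(cI_W) = c^{r^\ell}$, so $c^{r^{\ell+1}} = 1$ and $c$ has $r$-power order. Since $G \cap Z$ is central, the surjection $G \to \Sp_{2\ell}(r) = G/(G\cap Z)$ induces a surjection $G/[G,G] \to \Sp_{2\ell}(r)^{\mathrm{ab}}$ whose kernel is a quotient of $G\cap Z$, hence an $r$-group; as $G/[G,G]$ is thereby an extension of one finite $r$-group by another, it is an $r$-group provided $\Sp_{2\ell}(r)^{\mathrm{ab}}$ is one. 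This last point is the main obstacle, being the one place I must import a structural fact about symplectic groups: for odd $r$, $\Sp_{2\ell}(r)$ is perfect except when $(r,\ell) = (3,1)$, where $\Sp_2(3) = \SL_2(3)$ has abelianization $\Z/3$; in every case $\Sp_{2\ell}(r)^{\mathrm{ab}}$ is an $r$-group, completing the argument. Parts (i)--(iii) are otherwise self-contained, resting only on the involution $\sigma$ and the determinant computations already established in the preliminary lemmas.
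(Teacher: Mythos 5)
Your proof is correct and takes essentially the same approach as the paper: (i) via the involution $\sigma$ of Lemma \ref{lemma:tinvolution}, (ii) from the surjectivity of $\pi|_G$ together with (i), (iii) from the determinant computations of Lemmas \ref{lemma:detlemma} and \ref{lemma:detD} (you merely make explicit the Kronecker-product determinant step that the paper leaves implicit), and (iv) by showing $G \cap Z$ is an $r$-group and invoking perfectness of $\Sp_{2\ell}(r)$ for $(r,\ell) \neq (3,1)$, with $\Sp_2(3)$ having abelianization of order $3$. There are no gaps.
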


\begin{proof}
Claim (i) follows from Lemma \ref{lemma:tinvolution} (ii) and (iv). Then claim (ii) is a consequence of (i) and the fact that $G$ maps surjectively to $\Sp_{2\ell}(r)$.

For claim (iii), it suffices to check the claim for the generators of $G$. We have $\det(\lambda C_t) = 1$ and $\det(U_t)^r = 1$ by Lemma \ref{lemma:detlemma}, and $\det(D_{st}) = 1$ by Lemma \ref{lemma:detD}. Thus $\det(g)^r = 1$ for all $g \in G$.

It remains to consider (iv). First note that by claim (iii), the subgroup $G \cap Z$ of scalar matrices is an $r$-group. Thus it suffices to check that the abelianization of $G/(G \cap Z)$ is an $r$-group. This follows from (ii), since $\Sp_{2\ell}(r)$ is perfect for $(r,\ell) \neq (3,1)$, and $\Sp_2(3)$ has abelianization of order $r = 3$.\end{proof}

\section{Proof of Theorem \ref{thm:mainthm}}\label{section:mainthm}

In this section, we will prove Theorem \ref{thm:mainthm}. At this point we already know (Lemma \ref{lemma:Gprops}) that $G$ as in Theorem \ref{thm:mainthm} is a central extension of $\Sp_{2\ell}(r)$. For $(r,\ell) \neq (3,1)$ our strategy will be to prove that $G$ is perfect, so in fact $G \cong \Sp_{2\ell}(r)$ since the Schur multiplier of $\Sp_{2\ell}(r)$ is trivial \cite[Theorem 1.1]{Steinberg1981}. In the special case $(r, \ell) = (3,1)$, we will verify $G \cong \SL_2(3)$ by checking that the generators satisfy suitable relations.

We will first prove two lemmas that verify some relations among the generators.

\begin{lemma}\label{lemma:relations}
The following hold for generators of $G$:
	\begin{enumerate}[\normalfont (i)]
		\item $(\lambda C_t)^4 = 1$ for all $1 \leq t \leq \ell$.
		\item $(\lambda C_t U_t)^3 = 1$ for all $1 \leq t \leq \ell$.
	\end{enumerate}
\end{lemma}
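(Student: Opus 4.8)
The plan is to verify each relation by reducing it to the case $\ell = 1$ and then computing directly with the explicit diagonal and Vandermonde-type matrices.

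First I would handle part (i). Since $C_t = I_{r^{t-1}} \otimes C \otimes I_{r^{\ell - t}}$, we have $(\lambda C_t)^4 = I_{r^{t-1}} \otimes (\lambda C)^4 \otimes I_{r^{\ell - t}}$, so it suffices to show $(\lambda C)^4 = I_V$ in the case $\ell = 1$. But this is immediate from Lemma \ref{lemma:detC2}: we have $(\lambda C)^2 v_{\xi} = (-1)^{(r-1)/2} v_{-\xi}$, and applying this twice gives $(\lambda C)^4 v_{\xi} = (-1)^{(r-1)/2} \cdot (-1)^{(r-1)/2} v_{\xi} = v_{\xi}$, since $\sigma$ (the map $v_\xi \mapsto v_{-\xi}$) is an involution and the scalar squares to $1$. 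Thus $(\lambda C_t)^4 = 1$.

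For part (ii), the same tensor-factorization reduces the claim to proving $(\lambda C U)^3 = I_V$ in the case $\ell = 1$, where $U v_\xi = \theta^{\xi(\xi+r)/2} v_\xi$. The strategy here is to compute the matrix $\lambda C U$ explicitly and then verify the cube is the identity. The most promising route is to recognize $\pi(CU) = \pi(C)\pi(E)$ as an element of order $3$ in $\Sp_2(r) = \SL_2(r)$: by Remark \ref{remark:generatorswrite}, $\pi(C)$ sends $e \mapsto -f$, $f \mapsto e$, while $\pi(E) = \pi(U)$ sends $f \mapsto f + e$, so their product is an order-$3$ transvection-type element of $\SL_2(r)$. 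This shows $(\lambda C U)^3 \in \operatorname{Ker} \pi$, hence is a scalar by \eqref{eq:kerpi} combined with Lemma \ref{lemma:tinvolution}(ii) (which forces the kernel intersection with $G$ into scalars). To pin down that this scalar is $1$, I would take determinants: by Lemma \ref{lemma:detlemma}, $\det(\lambda C) = 1$ and $\det(U) = 1$ for $r > 3$, so $\det((\lambda C U)^3) = 1$, and a scalar $cI_V$ with $c^r = 1$ (from Lemma \ref{lemma:Gprops}(iii)) and $\det(cI_V) = c^r = 1$ is not by itself enough to conclude $c = 1$; I would instead compute a single matrix entry or the trace directly to fix $c$.

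The main obstacle is precisely this last normalization: showing the scalar is exactly $1$ rather than a nontrivial $r$th root of unity, and doing so uniformly including the exceptional case $r = 3$ where $\det(U) = \theta \neq 1$. The cleanest resolution is a direct computation of the matrix $M = \lambda C U$ followed by confirming $M^3 = I_V$ by explicit multiplication. Writing out the $(i,j)$ entry of $M$ as $\lambda \, \theta^{ij + j(j+r)/2}$ and forming the cube yields Gauss-sum-type exponential sums in the off-diagonal entries; the key identity to verify is that these sums vanish except on the diagonal, where they collapse to $1$ after using $\lambda^2 = r^{-1}(-1)^{(r-1)/2}$ from Lemma \ref{lemma:detC2} and the quadratic Gauss sum evaluation implicit in Lemma \ref{lemma:tracesquare}. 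I expect the algebra to be routine but delicate in tracking the factor $\lambda^3$ and the half-integer exponents modulo $r$; the relation $(r+1)/2$ appearing in $U = A^{(r+1)/2}E$ is what makes the exponent $\xi(\xi+r)/2$ an integer and keeps all $\theta$-powers well-defined, so careful bookkeeping modulo $r$ throughout is essential.
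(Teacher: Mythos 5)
Your part (i) is correct and identical to the paper's argument, and your treatment of (ii) follows the paper's route for its first half: reduce to $\ell=1$ via the tensor factorization, observe $\pi(CU)$ has order $3$ in $\SL_2(r)$, and conclude $(\lambda CU)^3 = cI$ is a scalar. The gap is in the final normalization, which you correctly single out as the main obstacle but do not actually overcome. Computing entries as you propose gives $(CU)^3 = r\operatorname{Tr}(U)\, I$, so $c = \lambda^3 r \operatorname{Tr}(U)$; but the only identities you invoke --- $\lambda^2 = (-1)^{(r-1)/2} r^{-1}$ from Lemma \ref{lemma:detC2} and $\operatorname{Tr}(U)^2 = (-1)^{(r-1)/2} r$ from Lemma \ref{lemma:tracesquare} --- pin down $\lambda$ and $\operatorname{Tr}(U)$ only up to sign, and hence yield only $c^2 = 1$, not $c = 1$. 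The diagonal entries of the cube do not ``collapse to $1$'' from these inputs; they collapse to $\pm 1$. Fixing the sign directly amounts to proving $\det(C) = r^{(r-1)/2}\operatorname{Tr}(U)$, a Schur-type exact Gauss-sum evaluation over an arbitrary field containing a primitive $r$th root of unity --- and in this paper that identity is derived as a \emph{consequence} of the present lemma (Remark \ref{remark:detformula}), not used as an input, so your sketch is circular as it stands. Over $\C$ one could quote Schur's evaluation, but over an abstract $\F$ no analytic normalization of the sign is available, so ``routine but delicate bookkeeping'' cannot finish this step.

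The missing idea is a parity argument for which you already have every ingredient on the table. By Lemma \ref{lemma:Gprops}(iii) applied to the scalar matrix $(\lambda CU)^3 = cI_r$ we get $c^{r^2} = 1$ (your ``$c^r=1$'' is slightly off, but harmless), so $c$ has odd, $r$-power order; consequently $c^2 = 1$ already forces $c = 1$. So instead of trying to evaluate the cube exactly, square it: $(\lambda CU)^6 = \lambda^6 r^2 \operatorname{Tr}(U)^2 I_r$, and now the two squared identities you quoted suffice, since $\lambda^6 = r^{-3}(-1)^{(r-1)/2}$ and $r^2\operatorname{Tr}(U)^2 = r^3(-1)^{(r-1)/2}$ multiply to $1$. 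This is precisely how the paper closes the proof, uniformly in $r$ (including $r=3$), and it is the step that your direct-multiplication plan cannot replace.
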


\begin{proof}
Claim (i) follows from Lemma \ref{lemma:detC2}. For claim (ii), from the definition of $C_t$ and $U_t$ as tensor products, it suffices to prove the lemma for $\ell = 1$. Suppose then that $\ell = 1$, in which case we aim to prove that $(\lambda C U)^3 = 1$. Under the homomorphism $\pi: G \rightarrow \Sp_{2}(r)$ from~\eqref{eq:pimap}, we have $$\pi(C) = \begin{pmatrix} 0 & 1 \\ -1 & 0 \end{pmatrix},\ \pi(U) = \begin{pmatrix} 1 & 1 \\ 0 & 1 \end{pmatrix}.$$ Thus $\pi((CU)^3)$ is trivial, so $(CU)^3$ is a scalar matrix by Lemma \ref{lemma:Gprops} (i) and~\eqref{eq:kerpi}. To check what the scalar multiplier is, we compute the $(0,0)$ entry of $(CU)^3$ with respect to the basis $v_0$, $v_1$, $\ldots$, $v_{r-1}$. To this end, we have $((CU)^3)_{00}$ equal to \begin{align*}\sum_{0 \leq i,j < r} (CU)_{0i}(CU)_{ij}(CU)_{j0} &= \sum_{0 \leq i,j < r} \theta^{\frac{i(i+r)}{2} + ij + \frac{j(j+r)}{2}} \\ &= \sum_{0 \leq i,j < r} \theta^{\frac{(i+j)(i+j+r)}{2}} \\ &= r \sum_{0 \leq i < r} \theta^{\frac{i(i+r)}{2}} = r \operatorname{Tr}(U),\end{align*} so $(CU)^3 = r \operatorname{Tr}(U) I_r$.

Now since $(\lambda CU)^3$ is a scalar matrix, by Lemma \ref{lemma:Gprops} (iii) it has $r$-power order. So since $r > 2$, it suffices to check that $(\lambda CU)^6 = 1$. For this, we first get \begin{equation}\label{eq:CE6}(\lambda CU)^6 = \lambda^6 (CU)^6 = \lambda^6 r^2 \operatorname{Tr}(U)^2 I_r = \lambda^6 r^3 (-1)^{(r-1)/2} I_r\end{equation} with Lemma \ref{lemma:tracesquare}. On the other hand, we have $$\lambda^6 = \left(r^{r-1} \det(C)^{-2}\right)^3 = \left(r^{r-1} r^{-r} (-1)^{(r-1)/2} \right)^3 = r^{-3} (-1)^{(r-1)/2}$$ by Lemma \ref{lemma:detC2}. Combining this with~\eqref{eq:CE6}, we get $(\lambda CU)^6 = 1$.\end{proof}

\begin{remark}\label{remark:detformula}
As seen in the proof of Lemma \ref{lemma:relations}, we have $(CU)^3 = r \operatorname{Tr}(U) I_r$. On the other hand $(\lambda CU)^3 = 1$ by Lemma \ref{lemma:relations}, so $\lambda^3 r \operatorname{Tr}(U) = 1$. Since $\lambda^2 = (-1)^{(r-1)/2} r^{-1}$ by Lemma \ref{lemma:detC2}, it follows that $\lambda (-1)^{(r-1)/2} \operatorname{Tr}(U) = 1$. Then plugging in the definition~\eqref{eq:lambda} of $\lambda$ gives the formula $$\det(C) = r^{(r-1)/2} \sum_{0 \leq i < r} \theta^{\frac{i(i+r)}{2}} = \legendre{2}{r} r^{(r-1)/2} \sum_{0 \leq i < r} \theta^{i^2},$$ where the second equality follows from the fact that $\sum_{0 \leq i < r} \theta^i = 0$.
\end{remark}

\begin{lemma}\label{lemma:relations2}
Suppose that $\ell \geq 2$. Then the following hold the for generators of $G$:
	\begin{enumerate}[\normalfont (i)]
		\item $((\lambda C_s)^2 D_{st})^2 = 1$ for all $1 \leq s < t \leq \ell$.
		\item Denote $X = C_sD_{st}C_s^{-1}$,where $1 \leq s < t \leq \ell$. Then $XU_sX^{-1}U_s^{-1} = U_t D_{st}$.
		\item Denote $Y = C_tD_{st}C_t^{-1}$,where $1 \leq s < t \leq \ell$. Then $YU_tY^{-1}U_t^{-1} = U_s D_{st}$.
	\end{enumerate}
\end{lemma}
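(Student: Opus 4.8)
Both identities are equalities of operators on $W$, and since $\lambda C_t$, $D_{st}$, $U_t$, $U_s$ and the element $X = C_t D_{st} C_t^{-1}$ all act as tensor products that are the identity outside the tensor slots $s$ and $t$, the plan is to verify them by a direct computation on basis vectors, suppressing all coordinates except those in slots $s$ and $t$. I would write $v_{a,b}$ for a basis vector carrying index $a$ in slot $s$ and $b$ in slot $t$, so that $D_{st} v_{a,b} = \theta^{ab} v_{a,b}$, $U_s v_{a,b} = \theta^{a(a+r)/2} v_{a,b}$, $U_t v_{a,b} = \theta^{b(b+r)/2} v_{a,b}$, and $C_t v_{a,b} = \sum_{0 \le i < r} \theta^{ib} v_{a,i}$.

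For (i), the cleanest route avoids expanding the square directly. By Lemma \ref{lemma:detC2}, $(\lambda C_t)^2$ acts in slot $t$ as $(-1)^{(r-1)/2}$ times the negation $\xi_t \mapsto -\xi_t$. Conjugating the diagonal operator $D_{st}$ by this negation replaces its eigenvalue $\theta^{\xi_s \xi_t}$ by $\theta^{-\xi_s \xi_t}$, and the scalar $(-1)^{(r-1)/2}$ cancels, so $(\lambda C_t)^2 D_{st} (\lambda C_t)^{-2} = D_{st}^{-1}$. Therefore $((\lambda C_t)^2 D_{st})^2 = (\lambda C_t)^2 D_{st} (\lambda C_t)^{-2} \cdot (\lambda C_t)^4 D_{st} = D_{st}^{-1} (\lambda C_t)^4 D_{st}$, which equals $1$ by Lemma \ref{lemma:relations}(i).

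For (ii), the one substantive step is to identify the element $X$. Using the inverse Fourier transform $C_t^{-1} v_{a,b} = \tfrac1r \sum_i \theta^{-ib} v_{a,i}$ (which also follows from Lemma \ref{lemma:C2}), I would compute $X v_{a,b} = C_t D_{st} C_t^{-1} v_{a,b}$: the middle application of $D_{st}$ contributes a factor $\theta^{ai}$, and then the orthogonality relation $\sum_i \theta^{i(a - b + j)} = r \delta_{j \equiv b - a}$ collapses the double sum to a single term, giving $X v_{a,b} = v_{a, b-a}$. Thus $X$ is the shear sending $\xi_t \mapsto \xi_t - \xi_s$ in slot $t$, with $X^{-1} v_{a,b} = v_{a, b+a}$. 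With $X$ in hand the rest is routine: applying $X U_t X^{-1} U_t^{-1}$ to $v_{a,b}$ fixes the vector and produces the scalar $\theta^{[(a+b)(a+b+r) - b(b+r)]/2}$, whose exponent simplifies to $a(a+r)/2 + ab$, matching exactly the eigenvalue of $U_s D_{st}$ on $v_{a,b}$.

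The main obstacle is precisely this computation of $X$ in part (ii): it is the only place where one inverts the Fourier-type map $C_t$ and invokes orthogonality of roots of unity, rather than simply multiplying diagonal matrices. Once $X$ is recognized as a shear, both remaining verifications reduce to matching exponents of $\theta$, the only bookkeeping subtlety being the cancellation of the sign $(-1)^{(r-1)/2}$ in part (i), which is automatic once one uses $(\lambda C_t)^4 = 1$.
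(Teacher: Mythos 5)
Your proposal is correct and follows essentially the same route as the paper: reduce to the two tensor slots $s,t$, compute $X$ explicitly as the shear $\xi_t \mapsto \xi_t - \xi_s$ via orthogonality of $r$th roots of unity, and verify both identities by matching eigenvalues of diagonal operators, with your part (i) argument via $(\lambda C_t)^2 D_{st} (\lambda C_t)^{-2} = D_{st}^{-1}$ and $(\lambda C_t)^4 = 1$ being only a light repackaging of the paper's direct check that the explicit map is an involution. (Incidentally, your formula $X v_{a,b} = v_{a,\,b-a}$ is the one consistent with the statement; the paper's displayed action of $X$ swaps the roles of the slots $s$ and $t$, an apparent typo.)
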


\begin{proof}
From the definition of $C_s$ and $D_{st}$ acting on a tensor product, it is clear that it suffices to prove the lemma in the case $\ell = 2$. Then $(s,t) = (1,2)$, and it follows from Lemma \ref{lemma:detC2} that the action of $(\lambda C_1)^2 D_{12}$ on $W$ is given by $$v_{\xi_1, \xi_2} \mapsto (-1)^{(r-1)/2} \theta^{\xi_1 \xi_2} v_{-\xi_1,\xi_2}$$ for all $0 \leq \xi_1,\xi_2 < r$. This clearly defines an involution, so (i) holds.

For claim (ii), we first note that a direct calculation from the definitions~\eqref{eq:ABCDE} shows that $X = C_1 D_{12} C_1^{-1}$ acts on $W$ by $$v_{\xi_1,\xi_2} \mapsto v_{\xi_1-\xi_2, \xi_2}$$ for all $0 \leq \xi_1, \xi_2 < r$. Note that then $X^{-1}$ acts on $W$ by $v_{\xi_1,\xi_2} \mapsto v_{\xi_1+\xi_2, \xi_2}$. Then a straightforward calculation shows that $XU_1X^{-1}U_1^{-1}$ acts on $W$ by $$v_{\xi_1,\xi_2} \mapsto \theta^{\frac{\xi_2(\xi_2+r)}{2} + \xi_1\xi_2} v_{\xi_1,\xi_2}$$ for all $0 \leq \xi_1, \xi_2 < r$. In other words, we have $XU_1X^{-1}U_1^{-1} = U_2 D_{12}$, as required.

Claim (iii) follows essentially with the same calculations as (ii), noting that $Y = C_2 D_{12} C_2^{-1}$ acts on $W$ by $v_{\xi_1,\xi_2} \mapsto v_{\xi_1, \xi_2-\xi_1}$ for all $0 \leq \xi_1, \xi_2 < r$.
\end{proof}

With this we can prove the main result.

\begin{proof}[Proof of Theorem \ref{thm:mainthm}]
We already know by Theorem \ref{thm:jordanthm} that the restriction of $\pi$ to $G$ defines a surjective map $G \rightarrow \Sp_{2\ell}(r)$. Thus to prove the theorem, it suffices to prove that $G \cong \Sp_{2\ell}(r)$.

We first consider the case where $r = 3$ and $\ell = 1$, so $G = \langle \lambda C_1, U_1 \rangle$. We will make use of the fact that $\Sp_2(3) = \SL_2(3)$ has a presentation \begin{equation}\label{eq:sl2_3presentation}\Sp_2(3) \cong \langle x,y\ |\ x^4 = y^3 = (xy)^3 = [x^2,y] = 1 \rangle.\end{equation} Now note that $(\lambda C_1)^4 = (\lambda C_1 U_1)^3 = 1$ by Lemma \ref{lemma:relations}, and $U_1^3 = 1$ is clear from the action of $U_1$. It follows from Lemma \ref{lemma:tinvolution} that $(\lambda C_1)^2$ is central in $G$, so by~\eqref{eq:sl2_3presentation} there is a surjective homomorphism $\Sp_2(3) \rightarrow G$. By Lemma \ref{lemma:Gprops} (iii) this is an isomorphism, which proves the theorem in this case.

Suppose then that $(r, \ell) \neq (3,1)$, in which case $\Sp_{2\ell}(r)$ is perfect. Moreover, we know from Lemma \ref{lemma:Gprops} (ii) that $G$ is a central extension of $\Sp_{2\ell}(r)$. Thus it will suffice to verify that $G$ is perfect, since the Schur multiplier of $\Sp_{2\ell}(r)$ is trivial \cite[Theorem 1.1]{Steinberg1981}, and thus $\Sp_{2\ell}(r)$ has no non-trivial perfect central extension. 


For the proof that $G$ is perfect, suppose first that $\ell > 1$. Now $G/[G,G]$ is an $r$-group by Lemma \ref{lemma:Gprops} (iv), and on the other hand $\lambda C_t$ has order coprime to $r$ by Lemma \ref{lemma:relations}, so $\lambda C_t \in [G,G]$ for all $1 \leq t \leq \ell$. Similarly $(\lambda C_s)^2 D_{st}$ has order coprime to $r$ by Lemma \ref{lemma:relations2} (i), so $(\lambda C_s)^2 D_{st} \in [G,G]$ and thus $D_{st} \in [G,G]$ for all $1 \leq s < t \leq \ell$. By Lemma \ref{lemma:relations2} (ii) we have $U_t D_{st} \in [G,G]$ for all $1 \leq s < t \leq \ell$, so we conclude that $U_t \in [G,G]$ for all $1 < t \leq \ell$. Finally by Lemma \ref{lemma:relations2} (iii) we have $U_1 D_{12} \in [G,G]$, so $U_1 \in [G,G]$. Since $[G,G]$ contains all the generators of $G$, we have $G = [G,G]$.

It remains to consider the case $\ell = 1$, so here $r > 3$ by assumption. We have $G = \langle \lambda C_1, U_1 \rangle$ and it follows from Lemma \ref{lemma:relations} that $\lambda C_1$ and $\lambda C_1 U_1$ both have order coprime to $r$. Since $G/[G,G]$ is an $r$-group (Lemma \ref{lemma:Gprops} (iv)), we conclude as in the previous paragraph that $G = [G,G]$. This completes the proof of the theorem.
\end{proof}

\section{Irreducible submodules of degrees \texorpdfstring{$(r^\ell \pm 1)/2$}{(rl ± 1)/2}}\label{section:submodules}

Let $G \cong \Sp_{2\ell}(r)$ be as in Theorem \ref{thm:mainthm}. Then the $\F[G]$-submodule structure of $W$ is known to be as follows, see for example \cite[Theorem 2.5]{Ward1972} and \cite[Lemma 5.2]{GuralnickMagaardSaxlTiep2002}.

	\begin{itemize}
		\item If $\operatorname{char} \F \neq 2$, then $W$ is a direct sum of two irreducible $\F[G]$-submodules of dimensions $(r^{\ell}-1)/2$ and $(r^{\ell}+1)/2$.
		\item If $\operatorname{char} \F = 2$, then $W$ is uniserial, with composition factors of dimensions $(r^{\ell}-1)/2$, $1$, $(r^{\ell}-1)/2$. Specifically $$0 \subsetneqq A \subsetneqq B \subsetneqq W$$ with $A \cong W/B$ irreducible of dimension $(r^{\ell}-1)/2$, and $\dim B/A = 1$. 
	\end{itemize}

The irreducible modules of dimensions $(r^{\ell} \pm 1)/2$ are known as \emph{irreducible Weil representations} of $\Sp_{2\ell}(r)$. The submodules of $W$ can be described as follows.

Let $\sigma$ be the involution $v_{\xi_1,\ldots,\xi_{\ell}} \mapsto v_{-\xi_1,\ldots,-\xi_{\ell}}$ defined in Lemma \ref{lemma:tinvolution}. Since $G$ centralizes $\sigma$ (Lemma \ref{lemma:tinvolution} (iv)), it follows that $G$ acts on the eigenspaces of $\sigma$. We will denote the $\pm 1$-eigenspace of $\sigma$ by $W^{\pm}$. 

Suppose that $\operatorname{char} \F \neq 2$. Then it is straightforward to see that \begin{align*} W^+ &= \langle v_{\xi_1, \ldots, \xi_\ell} + v_{-\xi_1, \ldots, -\xi_{\ell}} : 0 \leq \xi_1, \ldots, \xi_{\ell} < r \rangle, \\ W^- &= \langle v_{\xi_1, \ldots, \xi_\ell} - v_{-\xi_1, \ldots, -\xi_{\ell}} : 0 \leq \xi_1, \ldots, \xi_{\ell} < r \rangle.\end{align*} Furthermore, we have $W = W^+ \oplus W^{-}$. Here $W^{\pm}$ is an irreducible $\F[G]$-submodule of dimension $(r^{\ell} \pm 1)/2$.

Suppose then that $\operatorname{char} \F = 2$. In this case the non-zero proper submodules of $W$ are \begin{align*} A &= \langle v_{\xi_1, \ldots, \xi_\ell} + v_{-\xi_1, \ldots, -\xi_{\ell}} : 0 \leq \xi_1, \ldots, \xi_{\ell} < r \rangle, \\ B &= W^+ = \langle A, v_{0,\ldots,0} \rangle. \end{align*} Here $A$ is an irreducible $\F[G]$-module of dimension $(r^{\ell}-1)/2$. Furthermore $B/A$ is $1$-dimensional, and $W/B \cong A$ is irreducible of dimension $(r^{\ell}-1)/2$.

On each submodule, we can easily write down the action of the generators of $G$ on a basis, thus giving generating matrices for the irreducible Weil representation. For example if $\operatorname{char} \F \neq 2$, then $W^+$ has a basis consisting of the vectors $$y_{\xi_1, \ldots, \xi_{\ell}} := v_{\xi_1,\ldots,\xi_{\ell}} + v_{-\xi_1,\ldots,-\xi_{\ell}},$$ where $0 \leq \xi_1,\ldots,\xi_{\ell} < r$. It is then easy to see from~\eqref{eq:ABCDE} and~\eqref{eq:U} that similarly to the action of $G$ on the basis vectors of $W$, we have \begin{align*} C_t y_{\xi_1, \ldots, \xi_{\ell}} &= \sum_{0 \leq i < r} \theta^{i \xi_t} y_{\xi_1, \ldots, \xi_{t-1}, i, \xi_{t+1}, \ldots, \xi_{\ell}} \\ D_{st} y_{\xi_1, \ldots, \xi_{\ell}} &= \theta^{\xi_s \xi_t} y_{\xi_1, \ldots, \xi_{\ell}} \\ U_t v_{\xi_1, \ldots, \xi_{\ell}} &= \theta^{\frac{\xi_t(\xi_t+r)}{2}} y_{\xi_1, \ldots, \xi_{\ell}}  \end{align*} for all $0 \leq \xi_1,\ldots,\xi_{\ell} < r$.

\end{document}